\newtheorem{theorem}{Theorem}[section]
\newtheorem{lemma}{Lemma}[section]
\newtheorem{remark}{Remark}[section]
\newtheorem{corollary}{Corollary}[section]
\numberwithin{equation}{section}
\begin{document}
	
\title[On some numerical radius inequalities for Hilbert space operators] {On some numerical radius inequalities for Hilbert space operators}

\author{Mahdi Ghasvareh $^1$, Mohsen Erfanian Omidvar$^2$}

\address{$^{1,2} $ Department of Mathematics, Mashhad Branch, 
Islamic Azad University, Mashhad, Iran.\\
}

\email{mghasvareh@gmail.com}

\email{math.erfanian@gmail.com}

\subjclass[2010]{Primary 47A63, Secondary 46L05, 47A60.}
\keywords{Numerical radius, norm inequality, convex function.}

\begin{abstract}
This article is devoted to studying some new numerical radius inequalities for Hilbert space operators. Our analysis enables us to improve an earlier bound of numerical radius due to Kittaneh. It is shown, among others, that if $A\in \mathbb{B}\left( \mathscr{H} \right)$, then
\[\begin{aligned}
  \frac{1}{8}\left( {{\left\| A+{{A}^{*}} \right\|}^{2}}+{{\left\| A-{{A}^{*}} \right\|}^{2}} \right)\le  \omega ^{2}\left( A \right)  \le \left\| \frac{{{\left| A \right|}^{2}}+{{\left| {{A}^{*}} \right|}^{2}}}{2} \right\|-m\left( {{\left( \frac{\left| A \right|-\left| {{A}^{*}} \right|}{2} \right)}^{2}} \right ). 
\end{aligned}\]
\end{abstract}
\maketitle
\pagestyle{myheadings}
\markboth{\centerline {On some numerical radius inequalities for Hilbert space operators  }}
         {\centerline {M. Ghasvareh, M.E. Omidvar }}

\bigskip
\bigskip

\section{Introduction}
Let $\mathcal{B}\left( \mathcal{H} \right)$ denote the ${{C}^{*}}$ -algebra of all bounded linear operators on a complex Hilbert space $\mathcal{H}$ with inner product $\left\langle \cdot,\cdot \right\rangle $. For $A\in \mathcal{B}\left( \mathcal{H} \right)$, let $\omega \left( A \right)$ and $\left\| A \right\|$ denote the numerical radius and the operator norm of $A$, respectively. 
Recall that $\omega \left( A \right)=\underset{\left\| x \right\|=1}{\mathop{\sup }}\,\left\langle Ax,x \right\rangle $. It is well-known that $\omega \left( \cdot \right)$ defines a norm on $\mathcal{B}\left( \mathcal{H} \right)$, which is equivalent to the operator norm $\left\| \cdot \right\|$. In fact, for every $A\in \mathcal{B}\left( \mathcal{H} \right)$,
\begin{equation}\label{038}
\frac{1}{2}\left\| A \right\|\le \omega \left( A \right)\le \left\| A \right\|.
\end{equation}
Also, it is a basic fact that $\omega \left( \cdot \right)$ defines a norm on $\mathcal{B}\left( \mathcal{H} \right)$ which satisfies the power inequality
	\[\omega \left( {{A}^{n}} \right)\le {{\omega }^{n}}\left( A \right)\] 
for all $n=1,2,\ldots $.

In \cite{05}, Kittaneh gave the following estimate of the numerical radius which refines the second inequality in \eqref{038}: 
For every $A$,
\begin{equation}\label{5}
\omega \left( A \right)\le \frac{1}{2}\left\| \left| A \right|+\left| {{A}^{*}} \right| \right\|.
\end{equation}
The following estimate of the numerical radius has been given in \cite{06}:
\begin{equation}\label{36}
\frac{1}{4}\left\| {{\left| A \right|}^{2}}+{{\left| {{A}^{*}} \right|}^{2}} \right\|\le {{\omega }^{2}}\left( A \right)\le \frac{1}{2}\left\| {{\left| A \right|}^{2}}+{{\left| {{A}^{*}} \right|}^{2}} \right\|.
\end{equation}
The first  inequality in \eqref{36} also refines the first inequality in \eqref{038}. This can be seen by using the fact that for any positive operator $A,B\in \mathcal{B}\left( \mathcal{H} \right)$,
\[\max \left( \left\| A \right\|,\left\| B \right\| \right)\le \left\| A+B \right\|.\]
Actually,
\[\frac{1}{4}{{\left\| A \right\|}^{2}}=\frac{1}{4}\max \left( \left\| {{\left| A \right|}^{2}} \right\|,\left\| {{\left| {{A}^{*}} \right|}^{2}} \right\| \right)\le \frac{1}{4}\left\| {{\left| A \right|}^{2}}+{{\left| {{A}^{*}} \right|}^{2}} \right\|.\]

For other properties of the numerical radius and related inequalities, the reader may consult \cite{ms, 11, 10}.     
In this article, we give several refinements of numerical radius inequalities. Our results mainly improve the inequalities in \cite{06}.

\section{Main Results}

\begin{lemma}\label{1}
Let $A\in \mathcal{B}\left( \mathcal{H} \right)$. Then
\[\frac{1}{2}\left\| A\pm {{A}^{*}} \right\|\le \omega \left( A \right).\]
\end{lemma}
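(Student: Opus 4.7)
The plan is to use the Cartesian decomposition of $A$ into its real and imaginary parts, combined with the fact that the numerical radius of a self-adjoint operator coincides with its operator norm.

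First I would write $A = H + iK$, where
\[
H = \frac{A + A^{*}}{2}, \qquad K = \frac{A - A^{*}}{2i},
\]
are both self-adjoint. For any unit vector $x\in\mathcal{H}$, this gives
\[
\langle Ax,x\rangle = \langle Hx,x\rangle + i\langle Kx,x\rangle,
\]
where both $\langle Hx,x\rangle$ and $\langle Kx,x\rangle$ are real numbers (since $H$ and $K$ are self-adjoint). Hence
\[
|\langle Ax,x\rangle|^{2} = |\langle Hx,x\rangle|^{2} + |\langle Kx,x\rangle|^{2},
\]
which in particular yields
\[
|\langle Ax,x\rangle| \ge \max\bigl(|\langle Hx,x\rangle|,\, |\langle Kx,x\rangle|\bigr).
\]

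Taking the supremum over all unit vectors $x$ gives $\omega(A) \ge \omega(H)$ and $\omega(A) \ge \omega(K)$. Next I would invoke the standard fact that for any self-adjoint $T\in\mathcal{B}(\mathcal{H})$ one has $\omega(T) = \|T\|$; this follows immediately from the spectral theorem, or from the observation that the supremum in the definition of $\omega(T)$ is attained by an approximate eigenvector of $\pm\|T\|$. Applying it to $H$ and $K$ yields
\[
\omega(A) \ge \|H\| = \tfrac{1}{2}\|A + A^{*}\|, \qquad \omega(A) \ge \|K\| = \tfrac{1}{2}\|A - A^{*}\|,
\]
where the second equality uses $\|K\| = \bigl\|\tfrac{1}{2i}(A-A^{*})\bigr\| = \tfrac{1}{2}\|A - A^{*}\|$. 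Combining the two bounds gives the claimed inequality.

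There is essentially no hard step here; the only substantive input is the identity $\omega(T) = \|T\|$ for self-adjoint $T$, which is classical. The rest is just the observation that the real and imaginary parts of $\langle Ax,x\rangle$ are separately dominated by its modulus, together with the Cartesian decomposition of $A$.
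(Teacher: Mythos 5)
Your proof is correct, but it follows a different route from the paper's. The paper proves the lemma by observing that $A+A^{*}$ is self-adjoint (hence normal), so $\left\| A+A^{*} \right\|=\omega \left( A+A^{*} \right)\le \omega \left( A \right)+\omega \left( A^{*} \right)=2\omega \left( A \right)$, using subadditivity of the numerical radius together with $\omega \left( A^{*} \right)=\omega \left( A \right)$; the minus-sign case is then obtained by the substitution $A\mapsto iA$. You instead work with the Cartesian decomposition $A=H+iK$ and the pointwise identity ${{\left| \left\langle Ax,x \right\rangle  \right|}^{2}}={{\left\langle Hx,x \right\rangle }^{2}}+{{\left\langle Kx,x \right\rangle }^{2}}$, bound each term by the modulus, take suprema, and finish with $\omega \left( T \right)=\left\| T \right\|$ for self-adjoint $T$; both signs come out at once with no substitution trick. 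The common ingredient is the identity $\omega =\left\| \cdot  \right\|$ on self-adjoint (more generally normal) operators. Your argument is slightly more elementary in that it does not need the triangle inequality for $\omega$, and it is essentially the same computation the paper itself performs later in the proof of Theorem \ref{4}, where \eqref{011} and \eqref{013} give exactly $\left\| B \right\|,\left\| C \right\|\le \omega \left( A \right)$ for the Cartesian parts; the paper's version of the lemma, in exchange, is shorter on the page because it quotes the norm properties of $\omega$ wholesale.
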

\begin{proof}
Since $A+{{A}^{*}}$ is normal, we have
\[\begin{aligned}
   \left\| A+{{A}^{*}} \right\|&=\omega \left( A+{{A}^{*}} \right) \\ 
 & \le \omega \left( A \right)+\omega \left( {{A}^{*}} \right) \\ 
 & =2\omega \left( A \right).  
\end{aligned}\]
Therefore,
\begin{equation}\label{2}
\frac{1}{2}\left\| A+{{A}^{*}} \right\|\le \omega \left( A \right).
\end{equation}
Now, by replacing $A$ by $iA$ in \eqref{2}, we reach the desired result.
\end{proof}
\begin{theorem}\label{3}
Let $A\in \mathcal{B}\left( \mathcal{H} \right)$. Then
\[\frac{1}{4}\left\| {{\left| A \right|}^{2}}+{{\left| {{A}^{*}} \right|}^{2}} \right\|\le \frac{1}{8}\left( {{\left\| A+{{A}^{*}} \right\|}^{2}}+{{\left\| A-{{A}^{*}} \right\|}^{2}} \right)\le {{\omega }^{2}}\left( A \right).\]
\end{theorem}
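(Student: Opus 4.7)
The plan is to prove the two inequalities separately, with the right inequality being an immediate corollary of Lemma~\ref{1} and the left inequality coming from a $C^*$-algebraic identity for $(A+A^*)^2$ and $(A-A^*)^2$ combined with the triangle inequality.

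For the right inequality, I would square both versions of Lemma~\ref{1}: from $\frac{1}{2}\|A+A^*\|\le \omega(A)$ and $\frac{1}{2}\|A-A^*\|\le\omega(A)$ I get $\|A+A^*\|^2\le 4\omega^2(A)$ and $\|A-A^*\|^2\le 4\omega^2(A)$. Adding these and dividing by $8$ yields exactly $\frac{1}{8}(\|A+A^*\|^2+\|A-A^*\|^2)\le \omega^2(A)$.

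For the left inequality the key observation is that $A+A^*$ is self-adjoint and $A-A^*$ is skew-adjoint, so both are normal. Consequently $\|A+A^*\|^2=\|(A+A^*)^2\|$ and $\|A-A^*\|^2=\|(A-A^*)^*(A-A^*)\|=\|-(A-A^*)^2\|$. Next I would expand
\[
(A+A^*)^2 = A^2+(A^*)^2+|A|^2+|A^*|^2,\qquad -(A-A^*)^2 = |A|^2+|A^*|^2-A^2-(A^*)^2,
\]
so that the cross terms $A^2$ and $(A^*)^2$ cancel when these two operators are added:
\[
(A+A^*)^2 + \bigl(-(A-A^*)^2\bigr) = 2\bigl(|A|^2+|A^*|^2\bigr).
\]
Applying the triangle inequality to the right-hand side gives
\[
\|A+A^*\|^2+\|A-A^*\|^2 = \|(A+A^*)^2\|+\|-(A-A^*)^2\| \ge 2\bigl\||A|^2+|A^*|^2\bigr\|,
\]
which after dividing by $8$ is exactly the desired left inequality.

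I do not expect any serious obstacle: the whole argument rests on recognizing that the $C^*$-identity $\|N\|^2=\|N^*N\|$ applies to the normal operators $A\pm A^*$, after which the algebraic cancellation producing $2(|A|^2+|A^*|^2)$ is routine. The only care needed is to keep track of the minus sign coming from $(A-A^*)^* = -(A-A^*)$ so that the operators whose norms are being added are the positive ones $(A+A^*)^2$ and $-(A-A^*)^2=(A-A^*)^*(A-A^*)$, making the triangle inequality step give a genuine lower bound on their sum.
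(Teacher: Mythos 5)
Your proposal is correct and follows essentially the same route as the paper: the right inequality is Lemma~\ref{1} squared and averaged, and your expansion of $(A+A^*)^2-(A-A^*)^2=2\left( |A|^2+|A^*|^2 \right)$ is exactly the operator parallelogram law the paper invokes, combined with the same triangle inequality and the $C^*$-identity $\left\| |T|^2 \right\|=\|T\|^2$.
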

\begin{proof}
For any $A,B\in \mathcal{B}\left( \mathcal{H} \right)$, we have the following parallelogramm law
	\[{{\left| A+B \right|}^{2}}+{{\left| A-B \right|}^{2}}=2\left( {{\left| A \right|}^{2}}+{{\left| B \right|}^{2}} \right),\]
equivalently
	\[{{\left| \frac{A+B}{2} \right|}^{2}}+{{\left| \frac{A-B}{2} \right|}^{2}}=\frac{{{\left| A \right|}^{2}}+{{\left| B \right|}^{2}}}{2}.\]
Therefore, by the triangle inequality for the usual operator norm and Lemma \ref{1}, we have
\begin{align}
   \frac{1}{4}\left\| {{\left| A \right|}^{2}}+{{\left| {{A}^{*}} \right|}^{2}} \right\|&=\frac{1}{2}\left\| \frac{{{\left| A \right|}^{2}}+{{\left| {{A}^{*}} \right|}^{2}}}{2} \right\| \nonumber\\ 
 & =\frac{1}{2}\left\| {{\left| \frac{A+{{A}^{*}}}{2} \right|}^{2}}+{{\left| \frac{A-{{A}^{*}}}{2} \right|}^{2}} \right\| \nonumber\\ 
 & \le \frac{1}{2}\left\| {{\left| \frac{A+{{A}^{*}}}{2} \right|}^{2}} \right\|+\frac{1}{2}\left\| {{\left| \frac{A-{{A}^{*}}}{2} \right|}^{2}} \right\| \nonumber\\ 
  & =\frac{1}{2}{{\left\| \frac{A+{{A}^{*}}}{2} \right\|}^{2}}+\frac{1}{2}{{\left\| \frac{A-{{A}^{*}}}{2} \right\|}^{2}} \nonumber\\ 
 & \le {{\omega }^{2}}\left( A \right) \nonumber.  
\end{align}
We remark here that if $T\in \mathcal{B}\left( \mathcal{H} \right)$, and if $f$ is a non-negative increasing function on $\left[ 0,\infty  \right)$, then $\left\| f\left( \left| T \right| \right) \right\|=f\left( \left\| T \right\| \right)$. In particular, $\left\| {{\left| T \right|}^{r}} \right\|={{\left\| T \right\|}^{r}}$ for every $r>0$. This completes the proof of the theorem.
\end{proof}
We present a refinement the first   inequality from \eqref{36}. For see this, we need the following lemma, which can found in \cite{04}.

\begin{lemma}\label{02}
Let $A,B\in \mathbb{B}\left( \mathscr{H} \right)$. Then
\[\left\| A+B \right\|\le \sqrt{{{\left\| A^*A+B^*B \right\|}}+2\omega ( B{^*}A)}.\]
\end{lemma}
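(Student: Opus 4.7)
The plan is to rely on the $C^*$-identity and the expansion of $(A+B)^*(A+B)$. Starting from
\[
\|A+B\|^{2} = \|(A+B)^{*}(A+B)\| = \|A^{*}A + B^{*}B + A^{*}B + B^{*}A\|,
\]
the triangle inequality for the operator norm immediately gives
\[
\|A+B\|^{2} \le \|A^{*}A + B^{*}B\| + \|A^{*}B + B^{*}A\|.
\]
All of the work is therefore concentrated in dominating the cross term $\|A^{*}B + B^{*}A\|$ by $2\omega(B^{*}A)$, after which taking square roots yields the claim.

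For the cross term I would write $X := B^{*}A$ and note that $A^{*}B + B^{*}A = X^{*} + X$ is self-adjoint, hence in particular normal. Since the operator norm of a normal operator coincides with its numerical radius, and since $\omega(\cdot)$ is subadditive with $\omega(X^{*}) = \omega(X)$, one obtains
\[
\|A^{*}B + B^{*}A\| = \omega(X + X^{*}) \le \omega(X) + \omega(X^{*}) = 2\omega(B^{*}A).
\]
This is the same mechanism already used in the proof of Lemma \ref{1}, applied now to $X = B^{*}A$ in place of $A$. Substituting back into the displayed inequality and extracting the square root completes the argument.

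There is no serious obstacle here: the proof is essentially a bookkeeping exercise combining the $C^*$-identity with the elementary fact $\|Y + Y^{*}\| \le 2\omega(Y)$. The only conceptual step worth flagging is the reinterpretation of $A^{*}B + B^{*}A$ as $X + X^{*}$ for $X = B^{*}A$, which is what brings the numerical radius of $B^{*}A$ (rather than its norm) into the final bound and is precisely what makes the lemma sharper than the naive estimate $\|A^{*}B + B^{*}A\| \le 2\|B^{*}A\|$.
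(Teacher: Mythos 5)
Your proof is correct. The paper itself does not prove this lemma --- it simply quotes it from the cited reference of Dragomir --- so there is no in-paper argument to compare against, but your reconstruction is a complete and valid one: the $C^*$-identity gives $\left\| A+B \right\|^{2}=\left\| A^{*}A+B^{*}B+A^{*}B+B^{*}A \right\|$, the triangle inequality isolates the cross term, and writing $A^{*}B+B^{*}A=X+X^{*}$ with $X=B^{*}A$ lets you use $\left\| X+X^{*} \right\|=\omega \left( X+X^{*} \right)\le 2\omega \left( X \right)$, exactly the mechanism of Lemma \ref{1}. For comparison, the standard proof in the cited source works at the vector level: for a unit vector $x$ one expands
\[
\left\| \left( A+B \right)x \right\|^{2}=\left\langle \left( A^{*}A+B^{*}B \right)x,x \right\rangle +2\operatorname{Re}\left\langle B^{*}Ax,x \right\rangle \le \left\| A^{*}A+B^{*}B \right\|+2\omega \left( B^{*}A \right),
\]
and then takes the supremum over unit vectors. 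The two routes are essentially equivalent --- both trade the cross term for $2\omega \left( B^{*}A \right)$ rather than $2\left\| B^{*}A \right\|$ --- with the vector-level version avoiding any appeal to normality, and your operator-level version making the structural point (self-adjointness of the cross term) more visible.
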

By the above lemma, we can improve the first inequality in \eqref{36}.
\begin{theorem}\label{4}
Let $A\in \mathbb{B}\left( \mathscr{H} \right)$. Then
\begin{equation}\label{020}
\frac{1}{4}\left\| {{\left| A \right|}^{2}}+{{\left| {{A}^{*}} \right|}^{2}} \right\|\le \frac{1}{2}\sqrt{2{{\omega }^{4}}\left( A \right)+\frac{1}{8}\omega \left( {{\left( {{A}^{*}}-A \right)}^{2}}{{\left( {{A}^{*}}+A \right)}^{2}} \right)}\le {{\omega }^{2}}\left( A \right).
\end{equation}
\end{theorem}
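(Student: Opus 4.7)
The strategy is to apply Lemma \ref{02} to the Cartesian decomposition of $A$. Set $B = \frac{A+A^*}{2}$ (self-adjoint) and $C = \frac{A-A^*}{2}$ (skew-adjoint), so that $B^2$ and $-C^2 = C^*C$ are both positive operators. The parallelogram identity invoked in the proof of Theorem \ref{3} rewrites
\[
\frac{|A|^2 + |A^*|^2}{2} \;=\; B^2 + C^*C \;=\; B^2 - C^2,
\]
so $\frac{1}{4}\||A|^2 + |A^*|^2\| = \frac{1}{2}\|B^2 - C^2\|$, and the problem reduces to bounding $\|B^2 - C^2\|$ from above.

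To do this, I apply Lemma \ref{02} with the roles of its $A$ and $B$ played by $B^2$ and $-C^2$; since both are self-adjoint this yields
\[
\|B^2 - C^2\|^2 \;\le\; \|B^4 + C^4\| \;+\; 2\omega(C^2 B^2).
\]
For the first summand, the triangle inequality together with the power identity $\|T^r\| = \|T\|^r$ recorded at the end of the proof of Theorem \ref{3} gives $\|B^4 + C^4\| \le \|B\|^4 + \|C\|^4$, and Lemma \ref{1} bounds each of $\|B\|$ and $\|C\|$ by $\omega(A)$, so $\|B^4+C^4\| \le 2\omega^4(A)$. For the cross term, the identities $(A^*-A)^2 = 4C^2$ and $(A^*+A)^2 = 4B^2$ give
\[
2\omega(C^2 B^2) \;=\; \tfrac{1}{8}\,\omega\!\left((A^*-A)^2(A^*+A)^2\right).
\]
Plugging these estimates into the displayed inequality, taking square roots, and dividing by $2$ yields the left inequality of \eqref{020}.

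The right inequality reduces to the single claim $\omega(C^2 B^2) \le \omega^4(A)$, since that bound makes the quantity under the square root at most $4\omega^4(A)$. This is immediate from $\omega(T) \le \|T\|$, submultiplicativity of the operator norm, $\|C^2\|=\|C\|^2$ (because $C^*C = -C^2$), and Lemma \ref{1}: $\omega(C^2 B^2) \le \|C\|^2\|B\|^2 \le \omega^4(A)$. The only delicate step of the argument is the choice of inputs to Lemma \ref{02}: taking $B^2$ and $-C^2$ (rather than $A$ and $A^*$ directly) is precisely what causes the cross term $\omega(B^*A)$ of that lemma to reassemble into $\frac{1}{8}\omega\!\left((A^*-A)^2(A^*+A)^2\right)$, while simultaneously allowing $\|B^4+C^4\|$ to be estimated sharply by $2\omega^4(A)$ via Lemma \ref{1}.
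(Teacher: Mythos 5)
Your proof is correct and follows essentially the same route as the paper: both reduce $\frac{1}{4}\left\| |A|^{2}+|A^{*}|^{2} \right\|$ to a norm of a sum of the two positive operators built from the real and imaginary parts of $A$, apply Lemma \ref{02} to that pair, and then bound $\|B\|$, $\|C\|$ and the cross term $\omega(C^{2}B^{2})$ by $\omega(A)$ via Lemma \ref{1} (the paper's inequality \eqref{013}), $\omega(\cdot)\le\|\cdot\|$, and submultiplicativity. The only cosmetic difference is that you keep $C=\tfrac{A-A^{*}}{2}$ skew-adjoint and feed $-C^{2}=C^{*}C$ into the lemma, whereas the paper uses the self-adjoint imaginary part $\tfrac{A-A^{*}}{2i}$; the two are the same argument.
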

\begin{proof}
Let $A=B+iC$ be the Cartesian decomposition of $A$. Then $B$ and $C$ are self-adjoint operators. One can easily check that
\begin{equation}\label{019}
\frac{{{\left| A \right|}^{2}}+{{\left| {{A}^{*}} \right|}^{2}}}{4}=\frac{{{B}^{2}}+{{C}^{2}}}{2},
\end{equation}
and 
\begin{equation}\label{011}
{{\left| \left\langle Ax,x \right\rangle  \right|}^{2}}={{\left\langle Bx,x \right\rangle }^{2}}+{{\left\langle Cx,x \right\rangle }^{2}}
\end{equation}
for any unit vector $x\in \mathscr{H}$. Of course, the relation \eqref{011} implies 
	\[{{\left\langle Bx,x \right\rangle }^{2}}\left( \text{resp}\text{. }{{\left\langle Cx,x \right\rangle }^{2}} \right)\le {{\left| \left\langle Ax,x \right\rangle  \right|}^{2}}.\]
Now, by taking supremum over $x\in \mathscr{H}$ with $\left\| x \right\|=1$, we get
\begin{equation}\label{013}
{{\left\| B \right\|}^{2}}\left( \text{resp}\text{. }{{\left\| C \right\|}^{2}} \right)\le {{\omega }^{2}}\left( A \right).
\end{equation}
Whence,
	\[\begin{aligned}
   \frac{1}{4}\left\| {{\left| A \right|}^{2}}+{{\left| {{A}^{*}} \right|}^{2}} \right\|&=\frac{1}{2}\left\| {{B}^{2}}+{{C}^{2}} \right\| \quad \text{(by \eqref{019})}\\ 
 & \le\frac{1}{2}\sqrt{\| B ^{4}+ C^4\|+2\omega \left( {{C}^{2}}{{B}^{2}} \right)} \quad \text{(by Lemma \ref{02})}\\ 
 & \le\frac{1}{2}\sqrt{\| B\| ^{4}+ \|C\|^4+2\omega \left( {{C}^{2}}{{B}^{2}} \right)}\\
 & \le \frac{1}{2}\sqrt{2{{\omega }^{4}}\left( A \right)+2\omega \left( {{C}^{2}}{{B}^{2}} \right)} \quad \text{(by \eqref{013})}\\ 
 & \le \frac{1}{2}\sqrt{2{{\omega }^{4}}\left( A \right)+2\left\| {{C}^{2}}{{B}^{2}} \right\|} \quad \text{(by the second inequality in \eqref{038})}\\ 
 & \le \frac{1}{2}\sqrt{2{{\omega }^{4}}\left( A \right)+2{{\left\| B \right\|}^{2}}{{\left\| C \right\|}^{2}}} \\
 &\qquad \text{(by the submultiplicativity of the usual operator norm)}\\ 
 & \le {{\omega }^{2}}\left( A \right)  \quad \text{(by \eqref{013})}
\end{aligned}\]
i.e.,
	\[\frac{1}{4}\left\| {{\left| A \right|}^{2}}+{{\left| {{A}^{*}} \right|}^{2}} \right\|\le \frac{1}{2}\sqrt{2{{\omega }^{4}}\left( A \right)+2\omega \left( {{C}^{2}}{{B}^{2}} \right)}\le {{\omega }^{2}}\left( A \right).\]
Since
	\[\omega \left( {{C}^{2}}{{B}^{2}} \right)=\frac{1}{16}\omega \left( {{\left( {{A}^{*}}-A \right)}^{2}}{{\left( {{A}^{*}}+A \right)}^{2}} \right)\]
we get the desired result \eqref{020}. 
\end{proof}
\begin{remark}
Notice that if $A$ is a self-adjoint operator, then Theorem \ref{3} implies
	\[\frac{1}{2}{{\left\| A \right\|}^{2}}\le {{\left\| A \right\|}^{2}}\]
while from Theorem \ref{4} we infer that
	\[\frac{1}{2}{{\left\| A \right\|}^{2}}\le \frac{\sqrt{2}}{2}{{\left\| A \right\|}^{2}}\le {{\left\| A \right\|}^{2}}.\]
Hence, in this case, Theorem \ref{4} is better than Theorem \ref{3}.
\end{remark}
The next lemma can be found in \cite{07}.
\begin{lemma}
If $A$ and $B$ are positive operators in
$\mathbb{B}\left( \mathscr{H} \right)$,  Then
\[\left\| {{ A }}-{{ B }} \right\|\le \max\{  {{\left\| A \right\|}},{{\left\| B \right\|}}\}-\min \{ m\left( {{A }} \right),m\left( {{ B }} \right) \},\]
where $m\left( {{ A }} \right)=\inf \left\{ \left\langle  A x,x \right\rangle :\text{ }x\in \mathscr{H},\left\| x \right\|=1 \right\}$.
\end{lemma}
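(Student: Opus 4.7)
The plan is to exploit the self-adjointness of $A-B$ (as a difference of two positive, hence self-adjoint, operators), which yields the identity $\|A-B\|=\sup_{\|x\|=1}|\langle (A-B)x,x\rangle|$. This reduces the norm estimate to a pointwise bound on a difference of two quadratic forms, each of which can be controlled individually by $m(\cdot)$ from below and $\|\cdot\|$ from above.

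First I would fix a unit vector $x\in\mathscr{H}$ and split $\langle (A-B)x,x\rangle=\langle Ax,x\rangle-\langle Bx,x\rangle$. Positivity of $A$ and $B$ provides the two-sided bounds
\[
m(A)\le\langle Ax,x\rangle\le\|A\|,\qquad m(B)\le\langle Bx,x\rangle\le\|B\|,
\]
so that
\[
-(\|B\|-m(A))\ \le\ \langle Ax,x\rangle-\langle Bx,x\rangle\ \le\ \|A\|-m(B).
\]
Taking absolute values and then the supremum over unit $x$ would give the intermediate estimate $\|A-B\|\le\max\{\|A\|-m(B),\ \|B\|-m(A)\}$.

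The remaining step is a short case analysis showing that
\[
\max\{\|A\|-m(B),\ \|B\|-m(A)\}\ \le\ \max\{\|A\|,\|B\|\}-\min\{m(A),m(B)\},
\]
which follows because replacing the $\|\cdot\|$ summand by the larger of $\|A\|,\|B\|$ and the $m(\cdot)$ summand by the smaller of $m(A),m(B)$ can only enlarge the right-hand side. I expect the main obstacle, if any, to be purely notational bookkeeping in that case split; no spectral-theoretic machinery is needed beyond the standard fact that the operator norm of a self-adjoint element equals its numerical radius together with the elementary Rayleigh-quotient bounds for positive operators.
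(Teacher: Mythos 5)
Your argument is correct and complete. Note first that the paper itself does not prove this lemma at all: it is quoted verbatim from Kittaneh's paper \cite{07}, so there is no in-text proof to compare against. Your proof is the natural self-contained one, and every step checks out: since $A-B$ is self-adjoint, $\left\| A-B \right\|=\sup_{\left\| x \right\|=1}\left| \left\langle \left( A-B \right)x,x \right\rangle \right|$; the Rayleigh-quotient bounds $m(A)\le \left\langle Ax,x \right\rangle \le \left\| A \right\|$ and $m(B)\le \left\langle Bx,x \right\rangle \le \left\| B \right\|$ give $-\left( \left\| B \right\|-m\left( A \right) \right)\le \left\langle \left( A-B \right)x,x \right\rangle \le \left\| A \right\|-m\left( B \right)$, hence $\left\| A-B \right\|\le \max \left\{ \left\| A \right\|-m\left( B \right),\left\| B \right\|-m\left( A \right) \right\}$, and the final majorization by $\max \left\{ \left\| A \right\|,\left\| B \right\| \right\}-\min \left\{ m\left( A \right),m\left( B \right) \right\}$ holds termwise exactly as you say. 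Kittaneh's original route is the operator-order formulation of the same idea: from $B\ge m\left( B \right)I$ and $A\le \left\| A \right\|I$ one gets $A-B\le \left( \left\| A \right\|-m\left( B \right) \right)I$ and symmetrically $B-A\le \left( \left\| B \right\|-m\left( A \right) \right)I$, and then $\pm T\le cI$ for self-adjoint $T$ gives $\left\| T \right\|\le c$; your quadratic-form version is equivalent, equally elementary, and in fact records the slightly sharper intermediate bound $\max \left\{ \left\| A \right\|-m\left( B \right),\left\| B \right\|-m\left( A \right) \right\}$, which implies the stated inequality.
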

\begin{theorem}\label{6}
Let $A\in \mathbb{B}\left( \mathscr{H} \right)$. Then
\[\omega^2 \left( A \right)\le \frac{1}{2}\left[ \left\| {{\left| A \right|}^{2}}+{{\left| {{A}^{*}} \right|}^{2}} \right\|-m\left( {{\left( \left| A \right|-\left| {{A}^{*}} \right| \right)}^{2}} \right )\right].\]
\end{theorem}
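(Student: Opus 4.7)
The plan is to deduce the inequality from Kittaneh's first bound \eqref{5} by squaring it and then sharpening the resulting expression via the lemma immediately preceding Theorem \ref{6}. I would first square \eqref{5} and use the identity $\|T\|^{2}=\|T^{2}\|$ for self-adjoint $T$ to get
\[
\omega^{2}(A)\;\le\;\tfrac{1}{4}\bigl\|(|A|+|A^{*}|)^{2}\bigr\|.
\]

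Next, because $|A|$ and $|A^{*}|$ are self-adjoint, the operator parallelogram identity $(X+Y)^{2}+(X-Y)^{2}=2X^{2}+2Y^{2}$ applies and delivers
\[
(|A|+|A^{*}|)^{2} \;=\; 2\bigl(|A|^{2}+|A^{*}|^{2}\bigr)-\bigl(|A|-|A^{*}|\bigr)^{2}.
\]
Setting $P:=2(|A|^{2}+|A^{*}|^{2})$ and $Q:=(|A|-|A^{*}|)^{2}$, both operators are positive; moreover $P-Q=(|A|+|A^{*}|)^{2}\ge 0$, so $P\ge Q$. This dominance forces $\|P\|\ge\|Q\|$ and, because $\langle Px,x\rangle\ge\langle Qx,x\rangle$ for every $x$ entails $m(P)\ge m(Q)$, it also collapses the $\max/\min$ in the preceding lemma to
\[
\|P-Q\|\;\le\;\|P\|-m(Q).
\]

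Plugging back gives $\|(|A|+|A^{*}|)^{2}\|\le 2\||A|^{2}+|A^{*}|^{2}\|-m((|A|-|A^{*}|)^{2})$, and dividing by $4$ yields the stated bound. The substantive move is the second step: the parallelogram identity rewrites the Kittaneh-squared quantity as a difference of two comparable positive operators, which is exactly the form the lemma is built to exploit. I do not anticipate a serious obstacle; the single point to watch is the monotonicity $P\ge Q\Rightarrow m(P)\ge m(Q)$, which must be argued on quadratic forms (it is not a spectral-ordering statement in general), together with careful bookkeeping of the constants produced by squaring \eqref{5}.
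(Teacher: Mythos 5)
Your proof is essentially the paper's own argument: square Kittaneh's bound \eqref{5}, rewrite $(|A|+|A^{*}|)^{2}$ via the parallelogram identity as a difference of two comparable positive operators, and apply the preceding lemma with the $\max/\min$ collapsed through $P\ge Q$ (which indeed must be read on quadratic forms, exactly as you note). One bookkeeping caveat, shared by the paper's own proof (which in fact establishes the abstract's version of the bound): this route yields $\omega^{2}(A)\le \tfrac{1}{2}\bigl\||A|^{2}+|A^{*}|^{2}\bigr\|-\tfrac{1}{4}\,m\bigl((|A|-|A^{*}|)^{2}\bigr)$, so "dividing by $4$" does not literally produce the coefficient $\tfrac{1}{2}$ on the $m$-term displayed in the theorem statement.
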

\begin{proof}
We can write
\[\begin{aligned}
  & \left\| {{\left( \frac{\left| A \right|+\left| {{A}^{*}} \right|}{2} \right)}^{2}} \right\| \\ 
 & =\left\| {{\left( \frac{\left| A \right|-\left| {{A}^{*}} \right|}{2} \right)}^{2}}-\frac{{{\left| A \right|}^{2}}+{{\left| {{A}^{*}} \right|}^{2}}}{2} \right\| \\ 
 & \le \max \left( \left\| {{\left( \frac{\left| A \right|-\left| {{A}^{*}} \right|}{2} \right)}^{2}} \right\|,\left\| \frac{{{\left| A \right|}^{2}}+{{\left| {{A}^{*}} \right|}^{2}}}{2} \right\| \right)\\
 &\qquad-\min \left( m\left( {{\left( \frac{\left| A \right|-\left| {{A}^{*}} \right|}{2} \right)}^{2}} \right),m\left( \frac{{{\left| A \right|}^{2}}+{{\left| {{A}^{*}} \right|}^{2}}}{2} \right) \right) \\ 
 & \le \left\| \frac{{{\left| A \right|}^{2}}+{{\left| {{A}^{*}} \right|}^{2}}}{2} \right\|-  m\left( {{\left( \frac{\left| A \right|-\left| {{A}^{*}} \right|}{2} \right)}^{2}} \right ). 
\end{aligned}\]
On the other hand, since
\[\omega \left( A \right)\le \frac{1}{2}\left\| \left| A \right|+\left| {{A}^{*}} \right| \right\|,\]
we have
\[{{\omega }^{2}}\left( A \right)\le {{\left\| \left( \frac{\left| A \right|+\left| {{A}^{*}} \right|}{2} \right) \right\|}^{2}}=\left\| {{\left( \frac{\left| A \right|+\left| {{A}^{*}} \right|}{2} \right)}^{2}} \right\|.\]
Consequently,
\[{{\omega }^{2}}\left( A \right)\le \left\| \frac{{{\left| A \right|}^{2}}+{{\left| {{A}^{*}} \right|}^{2}}}{2} \right\|-m\left( {{\left( \frac{\left| A \right|-\left| {{A}^{*}} \right|}{2} \right)}^{2}} \right ) ,\]
as desired.
\end{proof}

Using some ideas of \cite{1}, we prove our last result.
\begin{theorem}
Let $A\in \mathbb{B}\left( \mathscr{H} \right)$ and let $f$ be a continuous function on the interval $\left[ 0,\infty  \right)$ and let $g$ be increasing and concave on $\left[ 0,\infty  \right)$, such that $gof$ is increasing and convex on $\left[ 0,\infty  \right)$. Then
\[f\left( \omega \left( A \right) \right)\le \left\| {{g}^{-1}}\left( \frac{gof\left( \left| A \right| \right)+gof\left( \left| {{A}^{*}} \right| \right)}{2} \right) \right\|\le \frac{1}{2}\left\| f\left( \left| A \right| \right)+f\left( \left| {{A}^{*}} \right| \right) \right\|.\]
\end{theorem}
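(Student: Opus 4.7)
The plan is to prove the two inequalities separately, with the common engine being the operator Jensen inequality: for a self-adjoint $T$ with $\sigma(T)$ inside the domain of a continuous function $\phi$, and for a unit vector $x$, one has $\phi(\langle Tx,x\rangle)\le\langle\phi(T)x,x\rangle$ when $\phi$ is convex and the reverse when $\phi$ is concave.

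For the left inequality, I will start from the pointwise bound $|\langle Ax,x\rangle|\le\tfrac{1}{2}(\langle|A|x,x\rangle+\langle|A^{*}|x,x\rangle)$, which follows from the mixed Schwarz inequality followed by AM--GM. Since both $g$ and $g\circ f$ are increasing, $f$ must itself be increasing, so applying $f$ preserves the inequality. Writing $f=g^{-1}\circ(g\circ f)$ and invoking the scalar convexity of $g\circ f$ together with the monotonicity of $g^{-1}$ yields $f(\tfrac{a+b}{2})\le g^{-1}\bigl(\tfrac{(g\circ f)(a)+(g\circ f)(b)}{2}\bigr)$. Operator Jensen for the convex function $g\circ f$ on the positive operators $|A|$ and $|A^{*}|$ then bounds the right-hand side by $g^{-1}(\langle Sx,x\rangle)$, where $S:=\tfrac{1}{2}\bigl[(g\circ f)(|A|)+(g\circ f)(|A^{*}|)\bigr]$. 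A final application of operator Jensen to the convex function $g^{-1}$ (convex because $g$ is concave increasing) gives $g^{-1}(\langle Sx,x\rangle)\le\langle g^{-1}(S)x,x\rangle\le\|g^{-1}(S)\|$, and taking the supremum over unit $x$ (using continuity and monotonicity of $f$ to commute it with the $\sup$) produces $f(\omega(A))\le\|g^{-1}(S)\|$.

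For the right inequality, I will use the identity $\|h(T)\|=h(\|T\|)$ valid whenever $T\ge 0$ and $h$ is non-negative, continuous, and increasing---the same remark already invoked at the end of the proof of Theorem \ref{3}. Since $S$ is positive and $g^{-1}$ is increasing and non-negative, this identity gives $\|g^{-1}(S)\|=g^{-1}(\|S\|)$, so it suffices to show $\|S\|\le g\bigl(\tfrac{1}{2}\|f(|A|)+f(|A^{*}|)\|\bigr)$. Setting $T_{1}=f(|A|)$ and $T_{2}=f(|A^{*}|)$, both positive, I use $\|S\|=\sup_{\|x\|=1}\langle Sx,x\rangle$ and apply operator Jensen for the concave $g$ to obtain $\langle g(T_{i})x,x\rangle\le g(\langle T_{i}x,x\rangle)$. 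Scalar concavity of $g$ then gives $\langle Sx,x\rangle\le g\bigl(\langle\tfrac{T_{1}+T_{2}}{2}x,x\rangle\bigr)$, and pushing $\sup_{x}$ through the monotone $g$ produces $\|S\|\le g\bigl(\|\tfrac{T_{1}+T_{2}}{2}\|\bigr)$. Applying the increasing $g^{-1}$ completes the argument.

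The main delicacies are regularity matters: verifying that $f$ is automatically increasing (needed to commute it with a supremum and to use scalar convexity of $g\circ f$), that $g^{-1}$ is well-defined and non-negative on the spectra that appear (so the norm identity $\|g^{-1}(S)\|=g^{-1}(\|S\|)$ is legitimate), and that all intermediate operators lie in the correct functional-calculus domains for both uses of operator Jensen. Once these regularity issues are pinned down, the algebraic steps are routine.
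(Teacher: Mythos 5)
Your argument is correct, and it reaches the conclusion by a genuinely different route than the paper. The paper starts from Kittaneh's bound \eqref{5}, applies the increasing convex function $g\circ f$ to both sides, converts $g\circ f\left( \left\| \tfrac{\left| A \right|+\left| A^* \right|}{2} \right\| \right)$ into an operator norm via the identity $\left\| h\left( T \right) \right\|=h\left( \left\| T \right\| \right)$, and then invokes (twice, essentially as known facts in the spirit of \cite{1} -- once for $g\circ f$ and once for $g^{-1}$) the operator-norm Jensen-type inequality $\left\| h\left( \tfrac{X+Y}{2} \right) \right\|\le \left\| \tfrac{h\left( X \right)+h\left( Y \right)}{2} \right\|$ for increasing convex $h$ and positive $X,Y$. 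You instead work entirely at the level of unit vectors: the mixed Schwarz inequality plus AM--GM (which is in effect a re-proof of \eqref{5} in pointwise form), scalar Jensen through the spectral measure for the convex functions $g\circ f$ and $g^{-1}$ and for the concave $g$, and the same norm identity $\left\| h\left( T \right) \right\|=h\left( \left\| T \right\| \right)$. The trade-off: the paper's proof is shorter and displays cleanly how the result refines \eqref{5}, but it leaves the two convexity norm inequalities unproved; your proof is self-contained, in effect supplying proofs of exactly those steps, at the cost of more bookkeeping. The regularity caveats you flag (that $f=g^{-1}\circ \left( g\circ f \right)$ is automatically increasing, that $g$ must be strictly increasing for $g^{-1}$ to exist, and that $f$ and $g\circ f$ need to be non-negative so that $S\ge 0$ and the identity $\left\| g^{-1}\left( S \right) \right\|=g^{-1}\left( \left\| S \right\| \right)$ is legitimate) are real, but they are implicit hypotheses of the theorem that the paper's own proof relies on in exactly the same way (its use of $\left\| g\circ f\left( T \right) \right\|=g\circ f\left( \left\| T \right\| \right)$ needs $g\circ f\ge 0$), so they do not constitute a gap specific to your argument.
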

\begin{proof}
As mentioned above, $gof$ is increasing and convex on $\left[ 0,\infty  \right)$, therefore, from the inequality \eqref{5},
\[\begin{aligned}
   gof\left( \omega \left( A \right) \right)&\le gof\left( \left\| \frac{\left| A \right|+\left| {{A}^{*}} \right|}{2} \right\| \right) \\ 
 & =\left\| gof\left( \frac{\left| A \right|+\left| {{A}^{*}} \right|}{2} \right) \right\| \\ 
 & \le \left\| \frac{gof\left( \left| A \right| \right)+gof\left( \left| {{A}^{*}} \right| \right)}{2} \right\|.  
\end{aligned}\]
Therefore,
\[gof\left( \omega \left( A \right) \right)\le \left\| \frac{gof\left( \left| A \right| \right)+gof\left( \left| {{A}^{*}} \right| \right)}{2} \right\|.\]
Now, since ${{g}^{-1}}$ is increasing and convex, we then have
\[\begin{aligned}
   f\left( \omega \left( A \right) \right)&={{g}^{-1}}\left( gof\left( \omega \left( A \right) \right) \right) \\ 
 & \le {{g}^{-1}}\left( \left\| \frac{gof\left( \left| A \right| \right)+gof\left( \left| {{A}^{*}} \right| \right)}{2} \right\| \right) \\ 
 & =\left\| {{g}^{-1}}\left( \frac{gof\left( \left| A \right| \right)+gof\left( \left| {{A}^{*}} \right| \right)}{2} \right) \right\| \\ 
 & \le \left\| \frac{f\left( \left| A \right| \right)+f\left( \left| {{A}^{*}} \right| \right)}{2} \right\| \\ 
 & =\frac{1}{2}\left\| f\left( \left| A \right| \right)+f\left( \left| {{A}^{*}} \right| \right) \right\|.  
\end{aligned}\]
Thus,
\[f\left( \omega \left( A \right) \right)\le \left\| {{g}^{-1}}\left( \frac{gof\left( \left| A \right| \right)+gof\left( \left| {{A}^{*}} \right| \right)}{2} \right) \right\|\le \frac{1}{2}\left\| f\left( \left| A \right| \right)+f\left( \left| {{A}^{*}} \right| \right) \right\|.\]
\end{proof}

\begin{corollary}
Let $A\in \mathbb{B}\left( \mathscr{H} \right)$. Then for any $r \ge 2$,
\[\begin{aligned}
  & {{\omega }^{r}}\left( A \right) \\ 
 & \le \frac{1}{2}\left\| {{\left| A \right|}^{r}}+{{\left| {{A}^{*}} \right|}^{r}}+{{\left| A \right|}^{\frac{r}{2}}}+{{\left| {{A}^{*}} \right|}^{\frac{r}{2}}}+I-\sqrt{2\left( {{\left| A \right|}^{r}}+{{\left| {{A}^{*}} \right|}^{r}}+{{\left| A \right|}^{\frac{r}{2}}}+{{\left| {{A}^{*}} \right|}^{\frac{r}{2}}} \right)+I} \right\| \\ 
 & \le \frac{1}{2}\left\| {{\left| A \right|}^{r}}+{{\left| {{A}^{*}} \right|}^{r}} \right\|. \\ 
\end{aligned}\]

\end{corollary}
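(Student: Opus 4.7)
The plan is to apply the preceding theorem with a specific choice of $f$ and $g$ tailored to produce the middle expression. Inspecting the term $P:=|A|^r+|A^*|^r+|A|^{r/2}+|A^*|^{r/2}$ inside the middle norm, together with the appearance of $\sqrt{2P+I}$, suggests that $g\circ f$ should create a sum of two powers and $g$ should be the function whose inverse generates the $(1+2y-\sqrt{1+4y})/2$ shape. Accordingly, I would take
\[
f(t)=t^{r},\qquad g(s)=s+\sqrt{s},\qquad t,s\in[0,\infty).
\]

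Next I would check the three hypotheses of the theorem. Continuity of $f$ is immediate. For $g$, direct differentiation gives $g'(s)=1+\tfrac{1}{2\sqrt{s}}>0$ and $g''(s)=-\tfrac{1}{4}s^{-3/2}<0$, so $g$ is increasing and concave on $[0,\infty)$. For the composition, $g\circ f(t)=t^{r}+t^{r/2}$, whose derivative $rt^{r-1}+\tfrac{r}{2}t^{r/2-1}$ is nonnegative and whose second derivative $r(r-1)t^{r-2}+\tfrac{r}{2}(\tfrac{r}{2}-1)t^{r/2-2}$ is nonnegative for $r\ge 2$; hence $g\circ f$ is increasing and convex, as required. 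I would then invert $g$ by setting $u=\sqrt{s}$ in $y=s+\sqrt{s}$, solving the quadratic $u^{2}+u-y=0$ and selecting the nonnegative root, which after squaring yields
\[
g^{-1}(y)=\frac{1+2y-\sqrt{1+4y}}{2}.
\]

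With these identifications, the theorem's outer bounds become exactly $\omega^{r}(A)\le\cdots\le\tfrac12\||A|^{r}+|A^{*}|^{r}\|$. For the middle term, writing $y=P/2$ with $P$ as above gives, via the functional calculus,
\[
g^{-1}\!\left(\frac{g\circ f(|A|)+g\circ f(|A^{*}|)}{2}\right)=\frac{1}{2}\Bigl(I+P-\sqrt{I+2P}\Bigr),
\]
which is precisely the operator whose norm appears in the statement. Substituting this into the chain provided by the theorem delivers the corollary.

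I do not anticipate a serious obstacle here: the bulk of the work is the algebraic identification of $g^{-1}$ and the elementary verification that $g\circ f$ is convex (which is where the hypothesis $r\ge 2$ is used, since the $t^{r/2}$ term is convex only when $r/2\ge 1$). The only mild subtlety is to justify the functional-calculus manipulation $g^{-1}\bigl(g(T)\bigr)=T$ for the positive operators $T=|A|^{r}$ and $T=|A^{*}|^{r}$, which follows from the continuity of $g$ and $g^{-1}$ on $[0,\infty)$.
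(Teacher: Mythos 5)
Your proposal is correct and matches the paper's own argument exactly: both apply the preceding theorem with $f(x)=x^{r}$ and $g(x)=x+\sqrt{x}$, compute $g^{-1}(x)=\frac{2x+1-\sqrt{4x+1}}{2}$, and substitute to obtain the stated chain. Your verification of the monotonicity/convexity hypotheses (where $r\ge 2$ is used) is simply a more detailed version of the paper's ``one can quickly check'' step.
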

\begin{proof}
Define
\[g\left( x \right)=x+\sqrt{x}\quad\text{  }\!\!\And\!\!\text{  }\quad f\left( x \right)={{x}^{r}},\text{ }\left( r\ge 2 \right)\]
on $\left[ 0,\infty  \right)$. Thus,
\[gof\left( x \right)={{x}^{r}}+{{x}^{\frac{r}{2}}}.\]
One can quickly check that $f$, $g$, and $gof$ satisfy all the assumptions in Theorem \ref{6}. Since
\[{{g}^{-1}}\left( x \right)=\frac{2x+1-\sqrt{4x+1}}{2},\]
we get the desired result.
\end{proof}

\end{document}